\def\Mat{\text{M}}
\def\GL{\text{GL}}
\def\bfB{\mathbf{B}}
\def\bfC{\mathbf{C}}
\def\defterm{\textbf}
\def\card{\#\,}
\newcommand{\Ker}{\operatorname{Ker}}
\newcommand{\coker}{\operatorname{coker}}
\newcommand{\im}{\operatorname{Im}}
\renewcommand{\setminus}{\smallsetminus}
\def\K{\mathbb{K}}
\def\N{\mathbb{N}}
\renewcommand{\L}{\mathbb{L}}
\def\lcro{\mathopen{[\![}}
\def\rcro{\mathclose{]\!]}}
\theoremstyle{definition}
\newtheorem{Def}{Definition}
\newtheorem{Not}[Def]{Notation}
\theoremstyle{plain}
\newtheorem{theo}{Theorem}
\newtheorem{lemme}[theo]{Lemma}
\theoremstyle{remark}
\newtheorem{Rems}{Remarks}
\title{Invariance of simultaneous similarity and equivalence of matrices under
extension of the ground field}
\author{Cl\'ement de Seguins Pazzis
\footnote{Teacher at Lyc\'ee Priv\'e Sainte-Genevi\`eve, 2, rue
de l'\'Ecole des Postes, 78029 Versailles Cedex, FRANCE.}
\footnote{e-mail address: dsp.prof@gmail.com}}
\date{\today}
\begin{document}

\maketitle

\abstract{We give a new and elementary proof that simultaneous
similarity and simultaneous equivalence of families of matrices are invariant under extension of the ground field,
a result which is non-trivial for finite fields and first appeared in a
paper of Klinger and Levy (\cite{KlingerLevy}).}

\vskip 2mm
\noindent
\emph{AMS Classification :} 15A21; 12F99

\vskip 2mm
\noindent
\emph{Keywords :} matrices, Kronecker reduction, field extension, simultaneous similarity, simultaneous equivalence.

\section{Introduction}

In this article, we let $\K$ denote a field, $\L$ a field extension of $\K$, and
$n$ and $p$ two positive integers.

\begin{Def}
Two families $(A_i)_{i \in I}$ and $(B_i)_{i \in I}$ of
matrices of $\Mat_n(\K)$ indexed over the same set $I$ are said to be \defterm{simultaneously similar}
when there exists $P \in \GL_n(\K)$ such that
$$\forall i \in I, \; P\,A_i\,P^{-1}=B_i$$
(such a matrix $P$ will then be called a \defterm{base change matrix}
with respect to the two families).

\item Two families $(A_i)_{i \in I}$ and $(B_i)_{i \in I}$ of
matrices of $\Mat_{n,p}(\K)$ indexed over the same set $I$ are said to be \defterm{simultaneously equivalent}
when there exists a pair $(P,Q) \in \GL_n(\K) \times \GL_p(\K)$ such that
$$\forall i \in I, \; P\,A_i\,Q=B_i.$$
\end{Def}

\noindent Of course, those relations extend the familiar relations of similarity and equivalence
respectively on $\Mat_n(\K)$ dans $\Mat_{n,p}(\K)$, and they are equivalence
relations respectively on $\Mat_n(\K)^I$ dans $\Mat_{n,p}(\K)^I$.

\noindent The simultaneous similarity of matrices is generally regarded upon as a ``wild problem"
where finding a useful characterisation by invariants seems out of reach.
See \cite{Friedland} for an account of the problem and an algorithmic approach to its solution
(for that last matter, also see \cite{KlingerLevy}).

\vskip 2mm
\noindent In this respect, our very limited goal here is to establish the following two results :

\begin{theo}\label{theosim}
Let $\K-\L$ be a field extension and $I$ be a set. \\
Let $(A_i)_{i \in I}$ and $(B_i)_{i \in I}$ be two families of matrices of $\Mat_n(\K)$. \\
Then $(A_i)_{i \in I}$ and $(B_i)_{i \in I}$ are simultaneously similar in $\Mat_n(\K)$
if and only if they are simultaneously similar in $\Mat_n(\L)$.
\end{theo}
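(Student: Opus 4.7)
The forward implication is immediate, since any matrix $P \in \GL_n(\K)$ realising the simultaneous similarity over $\K$ realises it \emph{a fortiori} over $\L$. For the converse, my plan is to linearise the problem. Introduce the $\K$-subspace of intertwiners
\[ V := \{P \in \Mat_n(\K) : \forall i \in I,\ P A_i = B_i P\}. \]
Because its defining conditions are $\K$-linear in $P$, the natural inclusion $V \otimes_\K \L \hookrightarrow \Mat_n(\L)$ identifies $V \otimes_\K \L$ with the analogous subspace of $\L$-intertwiners
\[ V_\L := \{P \in \Mat_n(\L) : \forall i \in I,\ P A_i = B_i P\}. \]
The theorem then reduces to the sub-statement: a $\K$-subspace $V$ of $\Mat_n(\K)$ contains an invertible matrix if and only if $V_\L$ does.

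The non-trivial direction of this reduction is transparent when $\K$ is infinite. Pick a $\K$-basis $(P_1, \ldots, P_d)$ of $V$ and form the polynomial
\[ f(x_1, \ldots, x_d) := \det\Bigl(\sum_{k=1}^d x_k P_k\Bigr) \in \K[x_1, \ldots, x_d]. \]
Since $V_\L$ contains an invertible matrix, $f$ takes a non-zero value at some point of $\L^d$, and hence is not the zero polynomial of $\K[x_1, \ldots, x_d]$. Over an infinite field $\K$, a non-zero polynomial has a non-zero value at some point of $\K^d$, which produces the desired invertible matrix in $V$.

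The main obstacle is the case of a finite ground field $\K$, where a non-zero polynomial may vanish identically on $\K^d$ and the polynomial argument collapses. Here I would adopt a module-theoretic viewpoint: each family equips $\K^n$ with the structure of a left module over the free associative $\K$-algebra $F = \K\langle X_i : i \in I\rangle$, and simultaneous similarity over $\K$ (resp.\ over $\L$) corresponds precisely to an isomorphism of the two resulting $F$-modules (resp.\ of their extensions of scalars, as $(F \otimes_\K \L)$-modules). The hard direction of the theorem then becomes a Noether--Deuring-type statement: both modules, being finite-dimensional over $\K$, admit Krull--Schmidt decompositions, and one must show that the multiplicities of the indecomposable $F$-summands are determined by the analogous multiplicities of the $(F \otimes_\K \L)$-indecomposables, together with the way each indecomposable $F$-module further decomposes after scalar extension. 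Comparing the two decompositions after tensoring with $\L$, whose isomorphism is the hypothesis, would then force equality of the $\K$-multiplicities and yield the sought-after simultaneous similarity over $\K$.
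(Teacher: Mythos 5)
Your strategy — trivial forward direction, intertwiner subspace and polynomial argument for large fields, then a separate device for finite $\K$ — parallels the paper's overall plan, and the infinite-field step (identifying $V\otimes_\K\L$ with $V_\L$ and invoking the non-vanishing of $\det$) is essentially identical to the paper's argument under the hypothesis $\card\K\geq n$. However, there are two problems with the rest.

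First, the ``sub-statement'' you isolate is false as a statement about arbitrary $\K$-subspaces of $\Mat_n(\K)$. Take $\K=\F_2$, $n=3$, and $V=\mathrm{span}\bigl(\mathrm{diag}(0,1,1),\ \mathrm{diag}(1,0,1)\bigr)\subset\Mat_3(\F_2)$. Every nonzero element of $V$ is one of $\mathrm{diag}(0,1,1)$, $\mathrm{diag}(1,0,1)$, $\mathrm{diag}(1,1,0)$, all singular, so $V\cap\GL_3(\F_2)=\emptyset$; yet over $\F_4$, with $\omega\notin\F_2$, the element $\mathrm{diag}(1,\omega,1+\omega)$ lies in $V_{\F_4}$ and is invertible. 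So the reduction you announce does not hold for a general $V$, and the theorem must exploit that $V$ is an intertwiner space. You effectively concede this by abandoning $V$ and switching to modules when $\K$ is finite, but the intermediate claim should not be stated as a free-standing equivalence.

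Second, the finite-field case is only a pointer, not a proof. Writing the problem as an isomorphism of finite-dimensional modules over $F=\K\langle X_i\rangle$ is fine (though note $F$ is infinite-dimensional; to invoke Krull--Schmidt you must first replace $F$ by its finite-dimensional image in $\mathrm{End}_\K(\K^n\oplus\K^n)$, and check this image behaves well under $-\otimes_\K\L$). But the core step — that comparing the Krull--Schmidt decompositions after tensoring with $\L$ ``would then force equality of the $\K$-multiplicities'' — is precisely the content of the Noether--Deuring theorem, and asserting it is circular: the hypothesis only says the two $\L$-decompositions agree, and you need to prove that the map from $\K$-isomorphism classes (with multiplicities) to $\L$-isomorphism classes (with multiplicities) induced by scalar extension is injective. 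A clean route is: for a finite extension $[\L:\K]=d$, restriction of scalars gives $M_\L\cong M^d$ as $F$-modules, so $M_\L\cong N_\L$ forces $M^d\cong N^d$, hence $M\cong N$ by Krull--Schmidt; one then reduces an arbitrary extension to a finite one. None of this is in your sketch. The paper takes a different, more elementary route for the finite case: it descends through a tower of separable quadratic extensions and, for a single quadratic extension, uses the Galois involution together with a Kronecker normal form for the pencil $Q+\varepsilon R$ to show that the conjugating matrix can be chosen to commute with the $A_i$'s, which is considerably more hands-on and avoids Krull--Schmidt altogether. Your plan could be made to work, but as written it leaves exactly the hard step unproven.
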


\begin{theo}\label{theoequiv}
Let $\K-\L$ be a field extension and $I$ be a set. \\
Let $(A_i)_{i \in I}$ and $(B_i)_{i \in I}$ be two families of matrices of $\Mat_{n,p}(\K)$. \\
Then $(A_i)_{i \in I}$ and $(B_i)_{i \in I}$ are simultaneously equivalent in $\Mat_{n,p}(\K)$
if and only if they are simultaneously equivalent in $\Mat_{n,p}(\L)$.
\end{theo}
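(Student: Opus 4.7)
One direction is trivial; the substance lies in proving that simultaneous equivalence over $\L$ forces simultaneous equivalence over $\K$. The first step is a \emph{linearization}. Setting $R := Q^{-1}$, the condition $PA_iQ=B_i$ rewrites as the $\K$-linear relation $PA_i=B_iR$. Consider therefore the $\K$-subspace
\[
W \ := \ \bigl\{(P,R) \in \Mat_n(\K)\times\Mat_p(\K) \ : \ PA_i = B_iR \text{ for all } i \in I\bigr\}.
\]
Being the kernel of a $\K$-linear map, $W$ satisfies the standard identification $W\otimes_\K \L = W_\L$, where $W_\L$ is the analogously defined subspace of $\Mat_n(\L)\times\Mat_p(\L)$. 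Simultaneous equivalence of $(A_i)$ and $(B_i)$ over a field $\F$ is equivalent to the existence of a pair in the $\F$-version of $W$ with both coordinates invertible. The hypothesis provides such a pair in $W_\L$, and the goal is one in $W$. Since $\Mat_n(\K)\times\Mat_p(\K)$ is finite-dimensional, only finitely many of the equations $PA_i=B_iR$ suffice to cut out $W$, so one may assume $I$ is finite.

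When $\K$ is infinite, a standard polynomial nonvanishing argument concludes. Fix a $\K$-basis $(X_1,Y_1),\dots,(X_m,Y_m)$ of $W$ and consider
\[
F(t_1,\dots,t_m) \ = \ \det\Bigl(\sum_{j=1}^m t_j X_j\Bigr) \cdot \det\Bigl(\sum_{j=1}^m t_j Y_j\Bigr) \ \in \ \K[t_1,\dots,t_m].
\]
Evaluating $F$ at the $\L$-coordinates of the given invertible $\L$-pair yields a nonzero value, so $F$ is not the zero polynomial; since $\K$ is infinite, $F$ admits a nonvanishing value at some $(\lambda_j)\in \K^m$, which furnishes the required pair $(P,R)=\bigl(\sum\lambda_jX_j,\sum\lambda_jY_j\bigr)\in W$.

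The genuinely difficult case, as stressed in the abstract, is that of a \emph{finite} base field. Then $F$ may vanish identically on $\K^m$ while being a nonzero polynomial, so the previous argument collapses. This is the main obstacle, and it cannot be bypassed by treating $W$ as an abstract subspace: a general $\K$-subspace of $\Mat_n(\K)\times\Mat_p(\K)$ containing an $\L$-invertible pair need \emph{not} contain a $\K$-invertible pair (explicit counterexamples already exist for $n=3$ over $\F_2$). One must exploit the specific structure of $W$ as the morphism space between the two families $(A_i)$ and $(B_i)$. My plan for this step is to invoke the Kronecker reduction announced in the keywords: decompose each family as a direct sum of canonical blocks whose discrete invariants (minimal indices together with invariant factors, in the spirit of Kronecker's normal form for matrix pencils when $|I|=2$) are $\K$-rational and stable under scalar extension, use the $\L$-equivalence to match the blocks of $(A_i)$ with those of $(B_i)$, and reassemble a $\K$-equivalence block by block. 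Implementing this descent cleanly for an arbitrary finite family indexed by $I$ is where the technical heart of the proof lies.
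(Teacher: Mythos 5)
Your proposal is correct in spirit for the infinite-field case but has a genuine gap in the finite-field case, and the plan you sketch there would not work as stated.

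The linearization $R:=Q^{-1}$, $PA_i=B_iR$, the morphism space $W$, and the polynomial nonvanishing argument when $\K$ is infinite are all sound, and they parallel what the paper does (for the similarity statement, in the subcase $\card\K\geq n$). You are also right that the finite-field case is the crux and that abstract subspace arguments cannot substitute for the specific structure of $W$.

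The gap is in your plan for finite $\K$. You propose to apply a Kronecker-type canonical reduction to the families $(A_i)$ and $(B_i)$ themselves, matching canonical blocks across the $\L$-equivalence. But Kronecker's normal form classifies \emph{pencils}, i.e.\ pairs of matrices under simultaneous equivalence. For three or more matrices, simultaneous equivalence (and simultaneous similarity) is a wild problem with no usable canonical form --- the paper says so explicitly in its introduction, and even announces that it will not take this route. So ``decompose each family into canonical blocks'' has no meaning for general $I$, and the descent-by-blocks you envision cannot be carried out.

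The paper's actual route is quite different and worth noting. Theorem~\ref{theoequiv} is deduced from Theorem~\ref{theosim} by a bordering trick: adjoin one extra index $a\notin I$ and set
$$C_a=D_a=\begin{bmatrix} I_n & 0 \\ 0 & 0\end{bmatrix},\qquad
C_i=\begin{bmatrix} 0 & A_i \\ 0 & 0\end{bmatrix},\qquad
D_i=\begin{bmatrix} 0 & B_i \\ 0 & 0\end{bmatrix}\quad(i\in I),$$
so that simultaneous equivalence of $(A_i)$ and $(B_i)$ is exactly simultaneous similarity of $(C_i)_{i\in I\cup\{a\}}$ and $(D_i)_{i\in I\cup\{a\}}$. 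Theorem~\ref{theosim} itself is proved by reducing to two cases: $\card\K\geq n$ (your polynomial argument), and $\K\subset\L$ a separable quadratic extension, the general finite-field case being handled by a tower of quadratic extensions of $\K$ and a compositum with $\L$. In the quadratic case the Kronecker reduction \emph{is} used --- but it is applied to the single pair $(Q,R)$ obtained by splitting the $\L$-conjugating matrix as $P=Q+\varepsilon R$, not to the families $(A_i)$, $(B_i)$. That is the missing idea; without it, the canonical-form descent you outline has no foothold for $\card I\geq 3$.
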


\begin{Rems} ${}$
\begin{enumerate}[(i)]
\item In both theorems, the ``only if" part is trivial.
\item It is an easy exercise to derive theorem \ref{theosim} from theorem \ref{theoequiv}.
However, we will do precisely the opposite !
\end{enumerate}
\end{Rems}

\section{A proof for simultaneous similarity}

\subsection{A reduction to special cases}

In order to prove theorem \ref{theoequiv}, we will not, \emph{contra} \cite{KlingerLevy},
try to give a canonical form for simultaneous similarity.
Instead, we will focus on base change matrices and
prove directly that if one exists in $\Mat_n(\L)$, then another (possibly the same),
also exists in $\Mat_n(\K)$. To achieve this, we will prove the theorem
in the two following special cases:
\begin{enumerate}[(i)]
\item $\K$ has at least $n$ elements;
\item $\K-\L$ is a separable quadratic extension.
\end{enumerate}
Assuming these cases have been solved, let us immediately prove the general case.
Case (i) handles the situation where $\K$ is infinite. Assume now that
$\K$ is finite, and choose a positive integer $N$ such that $(\card \K)^{2^N}\geq n$. \\
Since $\K$ is finite, there exists (see section V.4 of \cite{Lang}) a tower of $N$ quadratic separable extensions
$$\K \subset K_1 \subset K_2 \subset \cdots \subset K_N.$$
We let $\mathbb{M}$ denote a compositum extension of $K_N$ and $\L$ (as extensions of $\K$) :
$$\xymatrix{
\K \ar@{-}[d] \ar@{-}[r] & K_1 \ar@{-}[r] & K_2 \ar@{-}[r] & \cdots \ar@{-}[r] & K_N \ar@{-}[d] \\
\L \ar@{-}[rrrr] & & & & \mathbb{M}.}$$
Assume the families $(A_i)_{i \in I}$ and $(B_i)_{i \in I}$
of matrices of $\Mat_n(\K)$ are simultaneously similar in $\Mat_n(\L)$.
Then they are also simultaneously similar in $\Mat_n(\mathbb{M})$.
However, $\card K_N=(\card \K)^{2^N} \geq n$, so this simultaneous similarity
also holds in $\Mat_n(K_N)$.
Using case (ii) by induction, when then obtain that
that $(A_i)_{i \in I}$ and $(B_i)_{i \in I}$ are simultaneously similar in $\Mat_n(\K)$.

\subsection{The case $\#\,\K \geq n$}

The line of reasoning here is folklore, but we reproduce the proof for sake of completeness.
Let then $P \in \GL_n(\L)$ be such that
$$\forall i \in I, \; P\,A_i\,P^{-1}=B_i,$$
so
$$\forall i \in I, \; P\,A_i=B_i\,P.$$
Let $V$ denote the $\K$-vector subspace of $\L$ generated by the coefficients of $P$,
and choose a basis $(x_1,\dots,x_N)$ of $V$. Decompose then
$$P=x_1\,P_1+\dots+x_N\,P_N$$
with $P_1,\dots,P_N$ in $\Mat_n(\K)$, and let $W$ be the $\K$-vector subspace
of $\Mat_n(\K)$ generated by the $N$-tuple $(P_1,\dots,P_N)$.
Since the $A_i$'s and the $B_i$'s have all their coefficients in $\K$, the previous relations
give :
$$\forall i \in I, \; \forall k \in \lcro 1,N\rcro, \; P_k\,A_i=B_i\,P_k$$
hence
$$\forall i \in I, \; \forall Q \in W, \; Q\, A_i=B_i\, Q.$$
It thus suffices to prove that $W$ contains a non-singular matrix. \\
However, the polynomial $\det(Y_1\,P_1+\dots+Y_N\,P_N) \in \K[Y_1,\dots,Y_N]$
is homogeneous of total degree $n$ and is not the zero polynomial
because
$$\det(x_1.P_1+\dots+x_N.P_N)=\det(P) \neq 0.$$
Since $n \leq \card \K$, we conclude that the map $Q \mapsto \det Q$ does not totally vanish on $W$, which proves that
$W \cap \GL_n(\K)$ is non-empty, QED.

\subsection{The case $\L$ is a separable quadratic extension of $\K$}

We choose an arbitrary element $\varepsilon \in \L \setminus \K$
and let $\sigma$ denote the non-identity automorphism of the $\K$-algebra $\L$.
Assume $(A_i)_{i \in I}$ and $(B_i)_{i \in I}$ are simultaneously similar in $\Mat_n(\L)$, and
let $P \in \GL_n(\L)$ be such that
$$\forall i \in I, \; P\,A_i\,P^{-1}=B_i.$$
We first point out that the problem is essentially unchanged should $P$ be replaced with a
$\K$-equivalent matrix of $\GL_n(\L)$. \\
Indeed, let $(P_1,P_2)\in \GL_n(\K)^2$, and set $P':=P_1\,P\,P_2^{-1} \in \GL_n(\L)$,
and $A'_i:=P_2\,A_i\,(P_2)^{-1}$ and $B'_i:=P_1\,B_i\,(P_1)^{-1}$ for all $i \in I$.
Then :
$$\forall i \in I, \; P' \,A'_i\, (P')^{-1}=B'_i.$$
Since it follows directly from definition that
$(A_i)_{i \in I}$ and $(A'_i)_{i \in I}$ are simultaneously similar in $\Mat_n(\K)$,
and that it is also true of $(B_i)_{i \in I}$ and $(B'_i)_{i \in I}$,
it will suffice to show that $(A'_i)_{i \in I}$ and $(B'_i)_{i \in I}$ are simultaneously similar in $\Mat_n(\K)$,
knowing that they are simultaneously similar in $\Mat_n(\L)$.

\vskip 2mm
\noindent Returning to $P$, we split it as
$$P=Q+\varepsilon\,R \qquad \text{with $(Q,R)\in \Mat_n(\K)^2$.}$$
The previous remark then reduces the proof to the case where the pair
$(Q,R)$ is canonical in terms of Kronecker reduction
(see chapter XII of \cite{Gantmacher} and our section \ref{appendix}). More roughly, when can assume, since
$P$ is non-singular, that, for some $q \in \lcro 0,n\rcro$:
$$Q=\begin{bmatrix}
M & 0 \\
0 & I_{n-q}
\end{bmatrix} \quad \text{and} \quad
R=\begin{bmatrix}
I_q & 0 \\
0 & N
\end{bmatrix}$$
where $M \in \Mat_q(\K)$, $N$ is a nilpotent matrix of $\Mat_{n-q}(\K)$, and we have let $I_k$
denote the unit matrix of $\Mat_k(\K)$.

\vskip 2mm
\noindent
Let $i \in I$. Applying $\sigma$ coefficient-wise to
$P\,A_i\,P^{-1}=B_i$, we get:
$$\sigma(P)\,A_i\,\sigma(P)^{-1}=B_i=P\,A_i\,P^{-1},$$
hence $A_i$ commutes with $\sigma(P)^{-1}\,P$.
We now claim the following result:

\begin{lemme}\label{lastlemma}
Under the preceding assumptions, any matrix of $\Mat_n(\K)$ that commutes with $\sigma(P)^{-1}\,P$
also commutes with $P$.
\end{lemme}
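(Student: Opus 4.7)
My plan is to exploit the explicit block-diagonal form of $P$ inherited from the Kronecker normal form of $(Q,R)$, and to extract rigid constraints on any matrix $C\in\Mat_n(\K)$ commuting with $T:=\sigma(P)^{-1}P$. Set $\varepsilon':=\sigma(\varepsilon)$. Both diagonal blocks of
\[
P=\begin{bmatrix}M+\varepsilon I_q & 0\\ 0 & I_{n-q}+\varepsilon N\end{bmatrix}
\;\text{and}\;
\sigma(P)=\begin{bmatrix}M+\varepsilon' I_q & 0\\ 0 & I_{n-q}+\varepsilon' N\end{bmatrix}
\]
are invertible in $\L$: the top blocks because $\sigma(P)$ is non-singular, the bottom ones because $\varepsilon'N$ is nilpotent. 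Hence $T$ is itself block-diagonal, with
\[
T_1=I_q+(\varepsilon-\varepsilon')(M+\varepsilon' I_q)^{-1},\qquad T_2=I_{n-q}+(\varepsilon-\varepsilon')\,N(I_{n-q}+\varepsilon' N)^{-1}.
\]
Separability of $\L/\K$ ensures $\varepsilon-\varepsilon'\neq 0$, a fact used freely below.

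Splitting $C=\begin{bmatrix}C_{11}&C_{12}\\ C_{21}&C_{22}\end{bmatrix}\in\Mat_n(\K)$ into conforming blocks, the equation $CT=TC$ yields four block identities. The two diagonal ones reduce, via the formulas above, to $C_{11}M=MC_{11}$ and $C_{22}N=NC_{22}$: the first is immediate because $T_1-I_q$ is a nonzero scalar multiple of $(M+\varepsilon' I_q)^{-1}$, so commuting with $T_1$ is the same as commuting with $M+\varepsilon' I_q$, i.e.\ with $M$; for the second, I would note that $U:=N(I_{n-q}+\varepsilon'N)^{-1}$ is nilpotent and satisfies $N=U(I_{n-q}-\varepsilon'U)^{-1}$, so $N\in\L[U]$ and commuting with $U$ is equivalent to commuting with $N$.

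The crux of the argument lies in the off-diagonal equations $C_{12}T_2=T_1C_{12}$ and $C_{21}T_1=T_2C_{21}$. Expanding, cancelling the shared $C_{ij}$ term, dividing by the nonzero scalar $\varepsilon-\varepsilon'$, and clearing the inverses from the appropriate sides, both collapse to strikingly simple identities:
\[
C_{12}=M\,C_{12}\,N\qquad\text{and}\qquad C_{21}=N\,C_{21}\,M.
\]
Iterating yields $C_{12}=M^k C_{12}N^k$ and $C_{21}=N^k C_{21}M^k$ for every $k\geq 0$; taking $k$ larger than the nilpotency index of $N$ forces $C_{12}=C_{21}=0$. I expect this iteration step to be the main obstacle, in that it is the point where separability (to invert $\varepsilon-\varepsilon'$ cleanly) combines with the nilpotency of $N$ to do the real work.

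Putting everything together, $C$ is block-diagonal with $C_{11}$ commuting with $M$ and $C_{22}$ commuting with $N$, which is exactly the condition for $C$ to commute with $Q$ and with $R$ individually, and hence with $P=Q+\varepsilon R$.
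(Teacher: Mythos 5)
Your proof is correct, and it tracks the paper's own argument closely in structure. Both proofs use the Kronecker normal form of $(Q,R)$ to put $P$ in the block-diagonal form $\operatorname{diag}(M+\varepsilon I_q,\ I_{n-q}+\varepsilon N)$, and then show that any $\K$-matrix commuting with $\sigma(P)^{-1}P$ must itself be block-diagonal with each diagonal block commuting with the corresponding block of $P$. The only real divergence is in how block-diagonality of the commutant is obtained: the paper first applies $\sigma$ once more to deduce that $A$ commutes with $P^{-1}R$, whose two diagonal blocks are respectively invertible and nilpotent, and then invokes the Fitting decomposition ($A$ stabilizes $\im (P^{-1}R)^n$ and $\Ker (P^{-1}R)^n$, which are exactly the two coordinate subspaces) to get block-diagonality in one stroke. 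You instead work directly with $T=\sigma(P)^{-1}P$, write out all four block equations of $CT=TC$, and solve the off-diagonal ones explicitly, landing on $C_{12}=M\,C_{12}\,N$ and $C_{21}=N\,C_{21}\,M$, which iterate to zero by nilpotency of $N$. That iteration is exactly the elementary mechanism underlying the Fitting-decomposition shortcut, so the two arguments are equivalent in content; the paper's is terser, yours is more self-contained and makes the role of the nilpotency of $N$ visible. The treatment of the diagonal blocks and the final step (each block of $C$ commutes with the corresponding block of $P$, hence $C$ commutes with $P$) match the paper.
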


\noindent Assuming this lemma holds, we deduce that $\forall i \in I, \; P\,A_i\,P^{-1}=A_i$, hence
$(A_i)_{i \in I}$ and $(B_i)_{i \in I}$ are equal, thus simultaneously similar in $\Mat_n(\K)$,
which finishes our proof.

\begin{proof}[Proof of lemma \ref{lastlemma}]
Let $A \in \Mat_n(\K)$ which commutes with $\sigma(P)^{-1}\,P$.
Applying $\sigma$, we deduce that $A$ also commutes with
$P^{-1}\sigma(P)$, hence with $I_n+(\sigma(\varepsilon)-\varepsilon)\,P^{-1}\,R$,
hence with $P^{-1}\,R$ since $\sigma(\varepsilon) \neq \varepsilon$. \\
Notice then that
$$P^{-1}\,R=\begin{bmatrix}
(M+\varepsilon.I_q)^{-1} & 0 \\
0 & (I_{n-q}+\varepsilon\,N)^{-1}\, N
\end{bmatrix}$$
with $(M+\varepsilon.I_q)^{-1}$ non-singular and $(I_n+\varepsilon\,N)^{-1} N$ nilpotent, so
$A$, which stabilizes both $\im (P^{-1}\,R)^n$ and $\Ker (P^{-1}\,R)^n$, must be of the form
$$A=\begin{bmatrix}
C & 0 \\
0 & D
\end{bmatrix} \quad \text{for some $(C,D)\in \Mat_q(\K) \times \Mat_{n-q}(\K)$.}$$
Commutation of $A$ with $P^{-1}\,R$ ensures that $C$ commutes with $(M+\varepsilon.I_q)^{-1}$,
 whereas $D$ commutes with $(I_{n-q}+\varepsilon\,N)^{-1} N=\varepsilon^{-1}.I_{n-q}-
 \varepsilon^{-1}.(I_{n-q}+\varepsilon\,N)^{-1}$
hence with $(I_{n-q}+\varepsilon\,N)^{-1}$.
It follows that $A$ commutes with $P^{-1}$, hence with $P$.
\end{proof}

\section{A proof for simultaneous equivalence}

We will now derive theorem \ref{theoequiv} from theorem \ref{theosim}.
Under the assumptions of theorem \ref{theoequiv}, we choose an arbitrary object
$a$ that does not belong to $I$, and
define
$$C_a=D_a:=\begin{bmatrix}
I_n & 0 \\
0 & 0
\end{bmatrix} \in \Mat_{n+p}(\K)$$
and, for $i \in I$,
$$C_i=\begin{bmatrix}
0 & A_i \\
0 & 0
\end{bmatrix} \quad \text{and} \quad D_i=\begin{bmatrix}
0 & B_i \\
0 & 0
\end{bmatrix} \quad \text{in $\Mat_{n+p}(\K)$.}$$
The following two conditions are then equivalent :
\begin{enumerate}[(i)]
\item $(A_i)_{i \in I}$ and $(B_i)_{i \in I}$ are simultaneously equivalent ;
\item $(C_i)_{i \in I \cup \{a\}}$ and $(D_i)_{i \in I \cup \{a\}}$ are simultaneously similar.
\end{enumerate}
Indeed, if condition (i) holds, then we choose $(P,Q)\in \GL_n(\K) \times \GL_p(\K)$
such that $\forall i \in I, \; P\,A_i\,Q=B_i$, set $R:=\begin{bmatrix}
P & 0 \\
0 & Q^{-1}
\end{bmatrix}$, and remark that $R \in \GL_{n+p}(\K)$ and
$$\forall i \in I \cup \{a\}, \; R\,C_i\,R^{-1}=D_i.$$
Conversely, assume condition (ii) holds, and choose $R \in \GL_{n+p}(\K)$ such that
$$\forall i \in I \cup \{a\}, \; R\,C_i\,R^{-1}=D_i.$$
Equality $R\,C_a\,R^{-1}=C_a$ then entails that $R$ is of the form
$$R=\begin{bmatrix}
P & 0 \\
0 & Q
\end{bmatrix} \quad \text{for some $(P,Q)\in \GL_n(\K) \times \GL_p(\K)$,}$$
and the other relations then imply that
$$\forall i \in I, \; P\,A_i\,Q^{-1}=B_i.$$
Using equivalence of (i) and (ii) with both fields $\K$ and $\L$,
theorem \ref{theoequiv} follows easily from theorem \ref{theosim}.

\section{Appendix : on the Kronecker reduction of matrix pencils}\label{appendix}

Attention was brought to me that, in \cite{Gantmacher}, the proof that
every pencil of matrix is equivalent to a canonical one fails for finite fields. We will give
a correct proof here in the case of a ``weak" canonical form (that is all we need here, and
reducing further to a true canonical form is not hard from there using the theory of elementary divisors).

\begin{Not}
For $n \in \N$,
set $L_n=\begin{bmatrix}
1 & 0 & 0 & &  \\
0 & 1 & 0 & &  \\
 & & \ddots & \ddots &  \\
 &        &  & 1 & 0
\end{bmatrix} \in \Mat_{n,n+1}(\K)$
and $K_n=\begin{bmatrix}
0 & 1 & 0 & &  \\
0 & 0 & 1 & &  \\
 & & \ddots & \ddots & \\
 &         &  & 0 & 1
\end{bmatrix} \in \Mat_{n,n+1}(\K)$;
and, for arbitrary objects $a$ and $b$, define the Jordan matrix:
$$J_n(a,b)=\begin{bmatrix}
a & b & 0 &  \\
0 & a & b &  \\
 & & \ddots & \ddots   \\
\end{bmatrix} \in \Mat_n(\{0,a,b\}).$$
\end{Not}

\begin{theo}[Kronecker reduction theorem for pencils of matrices]
Let $A$ and $B$ in $\Mat_{n,p}(\K)$.
Then there are non-singular $(P_1,Q_1)\in \GL_n(\K) \times \GL_p(\K)$ such that
$P_1\,(A+X\,B)\,Q_1$ is block-diagonal with every non-zero diagonal block having one of the following forms, with only one
of the first type:
\begin{itemize}
\item $P+X\,I_r$ for some non-singular $P \in \GL_r(\K)$;
\item $J_r(1,X)$; \hskip 4mm $J_r(X,1)$; \hskip 4mm  $L_r+X K_r $; \hskip 4mm  $(L_r+XK_r)^t$.
\end{itemize}
This decomposition is unique up to permutation of blocks and up to similarity on the non-singular $P$.
\end{theo}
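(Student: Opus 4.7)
The plan is to argue by induction on $n + p$, peeling off one canonical block of the pencil at each step. First, I would test whether the right kernel of $A + X B$ viewed as a matrix over $\K(X)$ is nontrivial. If so, pick a nonzero polynomial vector $v(X) = v_0 + X v_1 + \cdots + X^r v_r$ in $\K[X]^p$ of minimal degree $r$ in this kernel. Matching coefficients in $(A + X B) v(X) = 0$ yields $A v_0 = 0$, $A v_{k+1} + B v_k = 0$ for $0 \leq k < r$, and $B v_r = 0$. Minimality of $r$ forces $(v_0, \ldots, v_r)$ to be linearly independent in $\K^p$ and $(B v_0, \ldots, B v_{r-1}) = -(A v_1, \ldots, A v_r)$ to be linearly independent in $\K^n$, since any smaller relation would produce a lower-degree kernel element. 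Completing these to bases of source and target, the pencil acquires $L_r + X K_r$ as a direct summand after some careful constant column and row operations that exploit the staircase structure of $L_r + X K_r$; the residual subpencil is strictly smaller, and induction applies. A transposition argument extracts $(L_r + X K_r)^t$ blocks arising from a nontrivial left $\K(X)$-kernel.

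So it suffices to treat the case where the pencil is regular, meaning $n = p$ and $\det(A + X B) \not\equiv 0$. This is where the finite-field subtlety bites: Gantmacher's argument translates by $X \mapsto X + \lambda$ with $\lambda \in \K$ chosen to avoid the roots of $\det(A + X B)$ and then left-multiplies by $(A + \lambda B)^{-1}$, but this needs $\#\,\K$ to exceed $n$. I would bypass this by building Jordan-type chains directly. For the infinity blocks, construct a maximal sequence of nonzero vectors $v_1, \ldots, v_s$ in the source with $B v_1 = 0$, $B v_{k+1} = A v_k$ for $1 \leq k < s$, and $A v_s \notin \im B$; regularity forces linear independence of $(v_k)$ and $(A v_k)$, and the chain is peeled off as a $J_s(1, X)$ block. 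Iterating makes $B$ invertible on the residual pencil. A symmetric construction starting from $\Ker A$ extracts all $J_s(X, 1)$ blocks and eventually makes $A$ invertible as well. Once $A$ and $B$ are both invertible on the remaining block, left-multiplication by $B^{-1}$ provides the strict equivalence $A + X B \sim B^{-1} A + X I_r$, giving the single $P + X I_r$ block with invertible $P = B^{-1} A$.

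The main obstacle is the regular-case step: the chain-peeling arguments must produce genuine block-diagonal direct summands rather than merely triangular decompositions, and the two successive extractions of infinity and zero chains have to be organised so they do not interfere. The right bookkeeping chooses $\K$-complements of the chain-generated subspaces that are simultaneously $A$- and $B$-compatible, which is what Gantmacher's scalar substitution hides for free over infinite fields. Uniqueness is easier: the sizes of the singular blocks are controlled by the minimal-index sequences of the right and left $\K(X)$-kernels of $A + X B$; the multiplicities and sizes of the $J_s(1, X)$ and $J_s(X, 1)$ blocks are read off from the elementary divisor factorisation at infinity and at zero of the $\K[X]$-matrix $A + X B$ (equivalently, from $Y A + B$ at $Y = 0$); and the residual matrix $P$ is determined up to similarity by the remaining finite elementary divisors, all of which are strict-equivalence invariants.
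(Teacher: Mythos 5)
Your proposal follows Gantmacher's classical two-phase structure (singular part via minimal right/left kernel degrees, then a regular core), whereas the paper takes a genuinely different route: it builds a tower $E_0 \subset E_1 \subset \cdots$ with $E_{k+1}=B^{-1}(F_k)$, $F_{k+1}=A(E_{k+1})$, and in one stroke extracts \emph{both} the column-singular blocks $L_s+XK_s$ \emph{and} the infinity blocks $J_r(1,X)$ from the restriction of $(A,B)$ to $E_N\to F_N$, before transposing once to deal with the row-singular and $\coker$ sides. Your outline is not wrong in spirit, but it has a concrete gap at exactly the point you flag as ``the main obstacle.''

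The gap is in the regular case. You claim that a maximal chain $v_1,\dots,v_s$ with $Bv_1=0$, $Bv_{k+1}=Av_k$, $Av_s\notin\im B$ ``is peeled off as a $J_s(1,X)$ block,'' with regularity forcing independence and the right bookkeeping supplying complements. This is false for an arbitrary such chain. Take $A=I_4$, $B$ nilpotent with $Be_1=0$, $Be_2=e_1$, $Be_3=e_2$, $Be_4=0$ (Jordan type $(3,1)$), so $I+XB$ is regular with canonical form $J_3(1,X)\oplus J_1(1,X)$. Start the chain at $v_1=e_1$ and choose $v_2=e_2+e_4$ (a legitimate solution of $Bv_2=Av_1$). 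Then $Av_2=e_2+e_4\notin\im B=\operatorname{span}(e_1,e_2)$, so the chain is saturated of length $2$. But no complement $E'$ of $\operatorname{span}(e_1,e_2+e_4)$ can satisfy $A(E')\subset E'$ and $B(E')\subset E'$: every $2$-dimensional $B$-invariant subspace of $\K^4$ here contains $e_1$, which already lies in the chain span. Equivalently, by uniqueness of the Kronecker form, a $J_2$ block cannot split off from $J_3\oplus J_1$. So the chain-peeling step, as written, does not produce a direct summand. You need the careful Weyr/Jordan bookkeeping (choosing chain vectors level by level from the top so that the chain spans admit simultaneously $A$- and $B$-stable complements). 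The paper's Lemma \ref{choixdebase} carries out precisely this bookkeeping via the carefully nested families $V_{\ell,k}$, $V'_{\ell,k}$, $W_{\ell,k}$, $W'_{\ell,k}$ and the condition $E_{k-1}+(E_k\cap\Ker f)=E_{k-1}\oplus V'_{k,k}$, while its Lemma \ref{bonsupplementaire} handles the compatible-complement step separately. Your singular-phase step (minimal-degree kernel vector yields $L_r+XK_r$) is essentially Gantmacher's Theorem in Chapter XII and does hold over any field, though you also leave the complement-construction there to ``careful constant column and row operations''; that part is fixable, but the regular-case chain argument as stated is not.
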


We will only prove here that such a decomposition exists. Uniqueness is not needed here so we will
leave it as an exercise for the reader. \\
We will consider $A$ and $B$ as linear maps from $E=\K^p$ to $F=\K^n$.
Without loss of generality, we may assume $\Ker A \,\cap\, \Ker B=\{0\}$ and $\im A +\im B =F$.
We define inductively two towers $(E_k)_{k \in \N}$ and $(F_k)_{k \in \N}$ of linear subspaces of $E$ and $F$ by:
\begin{enumerate}[(a)]
\item $E_0=\{0\}$ ; $F_0=A(\{0\})=\{0\}$ ;
\item $\forall k \in \N, \; E_{k+1}=B^{-1}(F_k) \; \text{and}\; F_{k+1}=A(E_{k+1})$.
\end{enumerate}
Notice that $E_1=\Ker B$. The sequences $(E_k)_{n \geq 0}$ and $(F_k)_{n \geq 0}$ are clearly non-decreasing so we can find
a smallest integer $N$ such that $E_N=E_k$ for every $k \geq N$.
Hence $F_N=F_k$ for every $k \geq N$, and $E_N=g^{-1}(F_N)$.
It follows that $A(E_N)=F_N$ and $B(E_N) \subset F_N$.
We now let $f$ and $g$ denote the linear maps from $E_N$ to $F_N$ induced by $A$ and $B$.

\noindent From there, the proof has two independent major steps:

\begin{lemme}\label{choixdebase}
There are basis $\bfB$ and $\bfC$ respectively of $E_N$ and $F_N$ such that
$M_{\bfB,\bfC}(f)+X\,M_{\bfB,\bfC}(g)$ is block-diagonal with all non-zero blocks having one of the
forms $J_r(1,X)$ or $L_s+X\,K_s$.
\end{lemme}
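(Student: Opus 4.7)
My plan is to build the bases $\bfB$ and $\bfC$ as concatenations of \emph{chains} of vectors, each chain contributing one diagonal block of the stated form. The chains will be constructed by a downward induction on the layer index $k=N,N-1,\ldots,1$, exploiting the filtrations $(E_k)$ and $(F_k)$ already produced in the preceding discussion.

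\medskip\noindent\textbf{Step 1: graded maps.} I would first observe that, since $f(E_k)=F_k$ by definition and $g^{-1}(F_{k-1})\cap E_N=E_k$ by the stabilization property of the tower, the pair $(f,g)$ induces on the graded pieces a surjection
\[\bar f_k\colon E_k/E_{k-1}\twoheadrightarrow F_k/F_{k-1}\]
and, for $k\ge 2$, an injection
\[\bar g_k\colon E_k/E_{k-1}\hookrightarrow F_{k-1}/F_{k-2}.\]
Writing $r_k=\dim(E_k/E_{k-1})$ and $s_k=\dim(F_k/F_{k-1})$, this forces $r_{k+1}\le s_k\le r_k$; the hypothesis $\Ker f\cap\Ker g=\{0\}$ further gives $r_1=s_1$, so no trivial length-one chains in $\Ker f\cap\Ker g$ will appear.

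\medskip\noindent\textbf{Step 2: chains, layer by layer.} Starting at $k=N$ and descending, I would maintain a list of ``active chains'' and fill in their layer-$k$ data. At layer $k$ there are three kinds of contributions:
\begin{enumerate}[(a)]
\item the $r_{k+1}$ chains inherited from layer $k+1$: each brings a known $v_{k+1}\in E_{k+1}$, for which I set $w_k:=g(v_{k+1})\in F_k$;
\item the $s_k-r_{k+1}$ tops of new $J_k$-chains: I pick vectors $w_k^{(J)}\in F_k$ whose classes, together with those in (a), form a basis of $F_k/F_{k-1}$;
\item the $r_k-s_k$ tops of new $L_{k-1}$-chains: I pick representatives in $\Ker f\cap E_k$ of a basis of $\Ker \bar f_k$.
\end{enumerate}
For each $w_k$ in (a) or (b), I would lift it to some $v_k\in E_k$ with $f(v_k)=w_k$, starting from any $f$-preimage in $E_k$ and then adjusting by an element of $E_{k-1}$ (using $f(E_{k-1})=F_{k-1}$) to make the equality hold exactly, not merely modulo $F_{k-1}$. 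For (c), the same adjustment trick turns any preimage of a class in $\Ker\bar f_k$ into one lying in $\Ker f\cap E_k$.

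\medskip\noindent\textbf{Step 3: assembly and verification.} The $\bar v_k$'s produced in (a) and (b) form a complement of $\Ker\bar f_k$ in $E_k/E_{k-1}$ (since $\bar f_k$ sends them bijectively to a basis of $F_k/F_{k-1}$), and the $L$-chain tops in (c) complete this to a basis of $E_k/E_{k-1}$; the linear independence of the $\bar w_k$'s arising in (a) is exactly the injectivity of $\bar g_{k+1}$. Summing over $k$ then gives the required bases $\bfB$ of $E_N$ and $\bfC$ of $F_N$. Ordering them so that each chain is contiguous and listed as $(v_1,\ldots)$ (respectively $(w_1,\ldots)$), the restriction of $f$ to a $J_k$-chain has matrix $I_k$ and that of $g$ the upper shift (yielding the block $J_k(1,X)$), while on an $L_s$-chain the matrices are $L_s$ and $K_s$ respectively.

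\medskip\noindent\textbf{The main obstacle} is the bookkeeping of linear independence at each layer: three flavours of vectors (inherited, new $J_k$-tops, new $L_{k-1}$-tops) must simultaneously be linearly independent modulo $E_{k-1}$ and exhaust $E_k/E_{k-1}$. That this works relies squarely on the complementary roles of surjective $\bar f_k$ and injective $\bar g_{k+1}$, together with the freedom offered by $f(E_{k-1})=F_{k-1}$ to realize the chain relations as equalities rather than congruences.
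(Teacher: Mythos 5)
Your proof is correct and follows essentially the same strategy as the paper's: a downward induction over the filtration $(E_k,F_k)$ in which, at each layer, one picks out inherited chains, tops of new $J$-chains, and tops of new $L$-chains. The paper packages these as the subspaces $V_{\ell,k}$, $V'_{\ell,k}$, $W_{\ell,k}$, $W'_{\ell,k}$ rather than tracking chains vector-by-vector, but the graded surjectivity of $f$, graded injectivity of $g$, and the lifting trick via $f(E_{k-1})=F_{k-1}$ are exactly the ingredients behind the paper's conditions (iii)--(v).
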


\begin{lemme}\label{bonsupplementaire}
There are splittings $E=E_N\oplus E'$ and $F=F_N \oplus F'$ such that $A(E') \subset F'$ and $B(E') \subset F'$.
\end{lemme}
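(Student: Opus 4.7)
The plan is to fix arbitrary complements $E=E_N\oplus E'_0$ and $F=F_N\oplus F'_0$ and then adjust them by linear shears. Since $A(E_N)=F_N$ and (by the stabilization $E_N=B^{-1}(F_N)$) also $B(E_N)\subset F_N$, both $A$ and $B$ are block upper-triangular in these coordinates:
\[A=\begin{pmatrix}A_{NN}&A_{NV}\\0&\overline A\end{pmatrix},\qquad B=\begin{pmatrix}B_{NN}&B_{NV}\\0&\overline B\end{pmatrix},\]
with $\overline B:E'_0\to F'_0$ injective (another consequence of $E_N=B^{-1}(F_N)$) and $\im\overline A+\im\overline B=F'_0$ (from $\im A+\im B=F$).

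I would then seek corrected complements of the form
\[E'=\{x+Px:x\in E'_0\},\qquad F'=\{y+Qy:y\in F'_0\}\]
for linear maps $P:E'_0\to E_N$ and $Q:F'_0\to F_N$. A direct matrix computation shows that the required inclusions $A(E'),B(E')\subset F'$ are equivalent to the linear system
\[A_{NN}P-Q\overline A=-A_{NV},\qquad B_{NN}P-Q\overline B=-B_{NV}.\qquad(\ast)\]

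The main obstacle is to prove that $(\ast)$ is solvable. The crucial intermediate result I would establish first is:

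\medskip
\noindent\emph{For every $\lambda\in\K$, the map $A+\lambda B:E_N\to F_N$ is surjective.}
\medskip

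\noindent This is proved by an easy induction on the filtration $0=E_0\subsetneq E_1\subsetneq\cdots\subsetneq E_N$ (with $F_k=A(E_k)$): given $y\in F_k$, write $y=A(x)$ with $x\in E_k$; then $(A+\lambda B)(x)=y+\lambda B(x)$ with $\lambda B(x)\in F_{k-1}$ (using $B(E_k)\subset F_{k-1}$), so by the inductive hypothesis there is $z\in E_{k-1}$ with $(A+\lambda B)(z)=-\lambda B(x)$, and then $(A+\lambda B)(x+z)=y$. Intuitively, this says the pencil on $(E_N,F_N)$ has ``no finite generalized eigenvalues''.

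The hard part is then to use this to solve $(\ast)$. The injectivity of $\overline B$ lets one prescribe $Q$ on $\overline B(E'_0)\subset F'_0$ from the second equation of $(\ast)$, after which $Q$ can be extended freely to a complement of $\overline B(E'_0)$ in $F'_0$; substituting into the first equation yields a reduced system for $P$ whose solvability is controlled by the ``spectrum'' of the quotient pencil $(\overline A,\overline B)$. The claim above provides precisely the algebraic input needed: the surjectivity of $A_{NN}+\lambda B_{NN}$ at \emph{every} scalar $\lambda\in\K$ corresponds to the spectral disjointness between the infinite-eigenvalue data of $(A_{NN},B_{NN})$ and the finite data of $(\overline A,\overline B)$, so no specific element of $\K$ must be available. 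This uniformity in $\K$ is exactly what makes the argument work over finite fields, where the classical generic-element proof of Gantmacher fails.
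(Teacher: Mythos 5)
Your reduction of the lemma to the solvability of the linear system $(\ast)$ is correct, and so is the intermediate surjectivity claim (the downward induction using $B(E_k)\subset F_{k-1}$ is fine). But the proof stops exactly where the real work begins: you never actually show that $(\ast)$ has a solution $(P,Q)$. The final paragraph is an outline, not an argument. Prescribing $Q$ on $\im\overline B$ from the second equation and then ``substituting into the first'' produces a coupled system in $P$ and in the free part of $Q$, and it is not at all clear how the surjectivity of $A_{NN}+\lambda B_{NN}$ for $\lambda\in\K$ feeds into its solvability. What you would need is a Roth-type removal theorem for matrix \emph{pencils} over an arbitrary field, with a hypothesis phrased in terms of the Kronecker structures of $(A_{NN},B_{NN})$ and $(\overline A,\overline B)$ — but that theorem is of comparable depth to the Kronecker reduction you are in the middle of establishing, and invoking ``spectral disjointness of the two pencils'' risks circularity, since the eigenstructure of a pencil over a non-algebraically-closed field is precisely what the Kronecker form encodes. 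Also, your surjectivity statement only covers $\lambda\in\K$; any genuine spectral argument would need it over an algebraic closure (it does hold there, by the same induction after extending scalars, but you would have to say so).

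By contrast, the paper's proof of this lemma is a direct downward induction on $k$: starting from arbitrary complements $E'$, $F'$ with $A(E')\subset F'\oplus F_N$ and $B(E')\subset F'\oplus F_N$, it uses $B^{-1}(F_N)=E_N$ to adjust $F'$ so that $B(E')\subset F'$, and then uses $F_k=A(E_k)$ to subtract off elements $g_i\in E_k$ from a basis of $E'$, lowering the index $k$ by one at each step. This is constructive, elementary, uses no solvability criterion for Sylvester-type systems, and already works over any field, including finite ones. In effect, the paper's induction carries out, one filtration step at a time, exactly the cancellation that your unproven ``solvability of $(\ast)$'' would deliver in one shot. If you want to salvage your route, you would need to either prove the relevant pencil Roth theorem from scratch, or — more in the spirit of the paper — replace the appeal to it with an induction along the filtration $E_0\subset\cdots\subset E_N$ that solves $(\ast)$ modulo $F_k$ for decreasing $k$, which would essentially reproduce the paper's argument in matrix form.
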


\noindent Assuming those lemmas are proven, let us see how we can easily conclude:
\begin{itemize}
\item We deduce from the two previous lemmas that $A+X\,B$ is $\K$-equivalent to some
$\begin{bmatrix}
A'+X\,B' & 0 \\
0 & C(X)
\end{bmatrix}$ where $C(X)$ is block-diagonal with all non-zero blocks
of the form $J_r(1,X)$ or $L_s+X\,K_s$, and $A'$ and $B'$ have coefficients in $\K$,
with $\Ker B'=\{0\}$; it will thus suffice to prove the existence of
a canonical form for the pair $(A',B')$;
\item applying the first step of the proof to the matrices $(A')^t$ and $(B')^t$,
we find that $A'+X\,B'$ is $\K$-equivalent to some
$\begin{bmatrix}
A''+X\,B'' & 0 \\
0 & D(X)
\end{bmatrix}$ where $D(X)$ is block-diagonal with all non-zero blocks
of the form $J_r(1,X)^t$ (which is $\K$-similar to $J_r(1,X)$) or $(L_s+X\,K_s)^t$, and $A''$ and $B''$ have coefficients in $\K$,
with $\Ker B''=\{0\}$ and $\coker B''=\{0\}$. It follows that
$B''$ is non-singular.
\item Finally, $(B'')^{-1}(A''+X\,B'')=(B'')^{-1}A''+X.I_k$ for some integer $k$,
and the pair $(A'',B'')$ can thus be reduced by using the Fitting decomposition of
$(B'')^{-1}A''$ combined with a Jordan reduction of its nilpotent part: this yields a block-diagonal matrix
$\K$-equivalent to $A''+X\,B''$ with all diagonal blocks of the form $J_r(X,1)$ or $P+X.I_s$ for some non-singular $P$.
This completes the proof of existence.
\end{itemize}

\begin{proof}[Proof of lemma \ref{bonsupplementaire}]
We proceed by induction. \\
Assume, for some $k \in \lcro 1,N\rcro$, that there are splittings $E=E_N\oplus E'$ and $F=F_N \oplus F'$ such that
$A(E') \subset F' \oplus F_k$ and $B(E') \subset F' \oplus F_k$.
Since $B^{-1}(F_N)=E_N$, the subspaces $F_N$ and $B(E')$ are independant. We can therefore
find some $F''$ such that $F'\oplus F_k=F''\oplus F_k$, $F_N \oplus F''=F$ and $B(E') \subset F''$.
Choose then a basis $(e_1,\dots,e_p)$ of $E'$, and decompose
$A(e_i)=f_i+f'_i$ for all $i \in \lcro 1,p\rcro$, with $f_i \in F''$ and $f'_i \in F_k$.
For $i \in \lcro 1,p\rcro$, we have $f'_i=A(g_i)$ for some $g_i \in E_k$.
Then $(e_1-g_1,\dots,e_p-g_p)$ still generates a supplementary subspace $E''$ of $E_N$ in $E$,
and we now have $A(e_i-g_i) \in F''$ and $B(e_i-g_i) \in F'' \oplus F_{k-1}$ for all $i \in \lcro 1,p\rcro$.
Hence $E=E_N\oplus E''$ and $F=F_N \oplus F''$, now with
$A(E'') \subset F'' \oplus F_{k-1}$ and $B(E'') \subset F'' \oplus F_{k-1}$.
The condition is thus proven at the integer $k-1$. By downward induction, we find that it holds for $k=0$, QED.
\end{proof}

\begin{proof}[Proof of lemma \ref{choixdebase}]
The argument is similar to the standard proof of the Jordan reduction theorem.
\begin{itemize}
\item Split $F_N=F_{N-1} \oplus W_{N,N}$ and
$E_N=E_{N-1} \oplus V_{N,N} \oplus V'_{N,N}$ such that $E_{N-1}\oplus V'_{N,N}=E_{N-1}+(E_N \cap \Ker f)$,
$V'_{N,N} \subset \Ker f$ and $f(V_{N,N})=W_{N,N}$ (so $f$ induces an isomorphism from $V_{N,N}$ to $W_{N,N}$).
Set $W_{N,N-1}=g(V_{N,N})$ and $W'_{N,N-1}=g(V'_{N,N})$.
Remark that $F_{N-2} \oplus W_{N,N-1} \oplus W'_{N,N-1} \subset F_{N-1}$, and split
$F_{N-1}=F_{N-2} \oplus W_{N,N-1} \oplus W'_{N,N-1} \oplus W_{N-1,N-1}$.
\item We then proceed by downward induction to define four families of linear subspaces
$(V_{\ell,k})_{1 \leq k \leq \ell \leq N}$, $(V'_{\ell,k})_{1 \leq k \leq \ell \leq N}$
$(W_{\ell,k})_{1 \leq k \leq \ell \leq N}$ and $(W'_{\ell,k})_{1 \leq k \leq \ell-1 \leq N-1}$ such that:
\begin{enumerate}[(i)]
\item for every $k \in \lcro 1,N\rcro$,
$$E_k=E_{k-1}\oplus V_{k,k} \oplus V_{k+1,k} \oplus \cdots \oplus V_{N,k} \oplus
V'_{k,k} \oplus V'_{k+1,k} \oplus \cdots \oplus V'_{N,k};$$
\item for every $k \in \lcro 1,N\rcro$,
$$F_k=F_{k-1}\oplus W_{k,k} \oplus W_{k+1,k} \oplus \cdots \oplus W_{N,k} \oplus
W'_{k+1,k} \oplus W'_{k+2,k} \oplus \cdots \oplus W'_{N,k};$$
\item for every $k \in \lcro 1,N\rcro$, $E_{k-1}+(E_k \cap \Ker f)=E_{k-1}\oplus V'_{k,k}$
and $V'_{k,k} \subset \Ker f$;
\item for every $\ell \in \lcro 1,N\rcro$ and $k \in \lcro 2,\ell\rcro$,
$g$ induces an isomorphism $g_{\ell,k}: V_{\ell,k} \overset{\simeq}{\longrightarrow} W_{\ell,k-1}$ and an isomorphism
$g'_{\ell,k} : V'_{\ell,k} \overset{\simeq}{\longrightarrow} W'_{\ell,k-1}$;
\item for every $\ell \in \lcro 1,N\rcro$ and $k \in \lcro 1,\ell\rcro$,
$f$ induces an isomorphism $f_{\ell,k} : V_{\ell,k} \overset{\simeq}{\longrightarrow} W_{\ell,k}$ and, if $k<\ell$,
an isomorphism $f'_{\ell,k} : V'_{\ell,k} \overset{\simeq}{\longrightarrow} W'_{\ell,k.}$
\end{enumerate}

$$\xymatrix{
 & V_{\ell,1} \ar[dl]_g \ar[d]^{\simeq}_f & \ar[dl]_g & \cdots & V_{\ell,\ell-1} \ar[dl]_g^{\simeq} \ar[d]^{\simeq}_f & V_{\ell,\ell}  \ar[dl]_g^{\simeq} \ar[d]^{\simeq}_f \\
 \{0\} & W_{\ell,1} & \cdots & & W_{\ell,\ell-1} & W_{\ell,\ell.}
}$$

$$\xymatrix{
 & V'_{\ell,1} \ar[dl]_g \ar[d]^{\simeq}_f & \ar[dl]_g & \cdots & V'_{\ell,\ell-1} \ar[dl]_g^{\simeq} \ar[d]^{\simeq}_f & V'_{\ell,\ell}  \ar[dl]_g^{\simeq} \ar[d]_f \\
 \{0\} & W'_{\ell,1} & \cdots & & W'_{\ell,\ell-1} & \{0\}.
}$$

\item Set $\ell \in \lcro 1,N\rcro$. Define
$$G_\ell=V_{\ell,1}\oplus \dots \oplus V_{\ell,\ell,} \quad
G'_\ell=V'_{\ell,1}\oplus \dots \oplus V'_{\ell,\ell,}$$
$$H_\ell=W_{\ell,1}\oplus \dots \oplus W_{\ell,\ell} \quad \text{and}\quad H'_\ell=W'_{\ell,1}\oplus \dots \oplus W'_{\ell,\ell-1.}$$
Notice that:
$$f(G_\ell)=H_\ell, \quad g(G_\ell)\oplus W_{\ell,\ell}=H_\ell,\quad  f(G'_\ell)=H'_\ell \quad \text{and}\quad g(G'_\ell)=H'_\ell.$$
From there, it is easy to conclude.
\item Let $n_\ell=\dim W_{\ell,\ell}$. Remark that $\dim V_{\ell,k}=\dim W_{\ell,k}=n_\ell$ for every
$1 \in \lcro 1,\ell\rcro$ and choose a basis $\bfC_{\ell,\ell}$ of $W_{\ell,\ell}$.
Define $\bfB_{\ell,\ell}=f_{\ell,\ell}^{-1}(\bfC_{\ell,\ell})$,
$\bfC_{\ell,\ell-1}:=g_{\ell,\ell}(\bfB_{\ell,\ell})$ and proceed by induction to recover a basis
for $V_{\ell,k}$ and $W_{\ell,k}$ for every suitable $k$:
by glueing together those basis, we recover respective basis
 $(\bfB_{\ell,1},\dots,\bfB_{\ell,\ell})$ and $(\bfC_{\ell,1},\dots,\bfC_{\ell,\ell})$ of
 $G_\ell$ and $H_\ell$ and remark that $f$ and $g$ induce linear maps from $G_\ell$ to $H_\ell$ with respective matrices $L_\ell \otimes I_{n_\ell}$ and $K_\ell \otimes I_{n_\ell}$ in those basis (remember that $E_1=\Ker g$).
A simple permutation of basis shows that those linear maps can be represented by $I_{n_\ell} \otimes L_\ell$ and $I_{n_\ell} \otimes K_\ell$
in a suitable common pair of basis.
\item Proceeding similarly for $G'_\ell$ and $H'_\ell$, but starting from a basis of $V'_{\ell,\ell}$, we obtain that
$f$ and $g$ induce linear maps from $G'_\ell$ to $H'_\ell$ and there is a suitable choice of basis so that
their matrices are respectively $I_s \otimes I_\ell$ and $I_s \otimes J_{\ell}(0,1)$ for some integer $s$.
\item Notice that we have defined splittings
$$E_N=G_1\oplus G'_1 \oplus G_2 \oplus G'_2 \oplus \cdots \oplus G_N \oplus G'_N$$
and
$$F_N=H_1\oplus H'_1 \oplus H_2 \oplus H'_2 \oplus \cdots \oplus H'_{N-1} \oplus H_N,$$
therefore lemma \ref{choixdebase} is proven by glueing together the various basis built here.
\end{itemize}
\end{proof}

\end{document}